\theoremstyle{plain}
\newtheorem{theorem}[equation]{Theorem}
\newtheorem{lemma}[equation]{Lemma}
\theoremstyle{definition}
\newtheorem{definition}[equation]{Definition}
\theoremstyle{remark}
\newtheorem{remark}[equation]{Remark}
\numberwithin{equation}{section}
\newcommand{\eps}{\varepsilon}
\newcommand{\dist}{\operatorname{dist}}
\newcommand{\dv}{\operatorname{div}}
\newcommand{\ree}{\mathbb{R}^{n+1}}
\newcommand{\Z}{\mathbb{Z}}
\newcommand{\dd}{\mathbb{D}}
\newcommand{\om}{\Omega}
\newcommand{\F}{\mathcal{F}}
\newcommand{\B}{\mathcal{B}}
\newcommand{\s}{\mathcal{S}}
\newcommand{\mut}{\mathfrak{m}}
\newcommand{\pom}{\partial\Omega}
\newcommand{\hm}{\omega}
\newcommand{\bbeta}{b\beta}
\renewcommand{\emptyset}{\mbox{\textup{\O}}}
\DeclareMathOperator{\diam}{diam}
\begin{document}
\allowdisplaybreaks

\title{A new characterization of chord-arc domains}


\address{Jonas Azzam \\ University of Washington \\
Department of Mathematics \\
Seattle, WA 98195-4350, USA}

\email{jazzam@math.washington.edu}


\address{Steve Hofmann
\\
Department of Mathematics
\\
University of Missouri
\\
Columbia, MO 65211, USA} \email{hofmanns@missouri.edu}


\address{Jos\'e Mar{\'\i}a Martell\\
Instituto de Ciencias Matem\'aticas CSIC-UAM-UC3M-UCM\\
Consejo Superior de Investigaciones Cient{\'\i}ficas\\
C/ Nicol\'as Cabrera, 13-15\\
E-28049 Madrid, Spain} \email{chema.martell@icmat.es}

\address{Kaj Nystr\"om\\Department of Mathematics, Uppsala University\\
S-751 06 Uppsala, Sweden}
\email{kaj.nystrom@math.uu.se}


\address{Tatiana Toro \\ University of Washington \\
Department of Mathematics \\
Seattle, WA 98195-4350, USA}

\email{toro@math.washington.edu}

\author{Jonas Azzam, Steve Hofmann,  Jos\'e Mar{\'\i}a Martell,\\ Kaj Nystr{\"o}m, Tatiana Toro}

\thanks{The first author was partially supported by NSF RTG grant 0838212.
The second author was supported by NSF grants DMS-1101244 and DMS-1361701.  The
third author was supported in part by MINECO Grant
MTM2010-16518, ICMAT Severo Ochoa project SEV-2011-0087. He
also acknowledges that the research leading to these results
as received funding from the European Research Council under
he European Union's Seventh Framework Programme
(FP7/2007-2013)/ ERC agreement no. 615112 HAPDEGMT. The
fourth author was partly supported by the Swedish research
council VR. The last author was partially supported by the Robert R. \& Elaine F. Phelps Professorship in Mathematics.
}

\date{\today}
\subjclass[2010]{28A75, 28A78, 31A15, 31B05, 
35J25, 
42B37, 49Q15}

\keywords{Chord-arc domains, NTA domains, 1-sided NTA domains, uniform domains,
uniform rectifiability,
Carleson measures, harmonic measure, $A_\infty$ Muckenhoupt weights.}

\begin{abstract}
We show that if $\om \subset \ree$, $n\geq 1$,
is a uniform domain (aka 1-sided NTA domain), i.e., a domain which enjoys interior Corkscrew
and Harnack Chain conditions, then uniform rectifiability of the boundary of $\om$ implies the existence of
exterior Corkscrew points at all scales, so that in fact, $\om$ is a chord-arc domain,
i.e., a domain with an Ahlfors-David regular boundary which satisfies both interior and exterior Corkscrew conditions,
and an interior Harnack Chain condition.  We discuss some
implications of this result, for theorems of F. and M. Riesz type,
and for certain free boundary problems.
\end{abstract}

\maketitle



\section{Introduction and statement of main result}\label{s1}

An NTA ({\it Non-Tangentially Accessible}) domain $\om$ is one which enjoys an interior
Harnack Chain condition, as well as interior and exterior Corkscrew conditions
(see Definitions \ref{def1.cork}, \ref{def1.hc}, and \ref{def1.nta} below).  This notion was introduced
in \cite{JK}, and has found numerous applications in the theory of elliptic equations and free boundary problems.
A {\it chord-arc} domain is an NTA domain whose boundary is Ahlfors-David regular (ADR)
(see Definition \ref{def1.ADR} below).

A 1-sided NTA domain (also known as a {\it uniform domain} in the literature) satisfies
interior Corkscrew and Harnack Chain conditions, but one imposes no assumptions
on the exterior domain $\om_{\rm ext}:=\ree\setminus \overline{\om}$.   A 1-sided chord-arc domain
is a 1-sided NTA domain with an ADR boundary.  In general, the 1-sided conditions are strictly weaker than their
standard counterparts:  for example, the domain $\om =\mathbb{R}^2\setminus K$, where $K$ is
Garnett's ``4-corners Cantor set" (see, e.g., \cite{DS2}), is a 1-sided chord-arc domain, but
$\overline{\om}=\mathbb{R}^2$, thus, there is no exterior domain.

The various versions of non-tangential accessibility have been studied in connection
with quantitative analogues of the F. and M. Riesz Theorem \cite{RR}, in which one
obtains scale-invariant absolute continuity (e.g., the $A_\infty$ condition of Muckenhoupt,
or more generally, weak-$A_\infty$)
of harmonic measure $\hm$ with respect to surface measure on the boundary, given some quantitative
rectifiability property of the boundary, along with some quantitative connectivity hypothesis of
$\om$.  This has been done in 2 dimensions by M. Lavrentiev \cite{La}; and in higher dimensions, for Lipschitz domains
by B. Dahlberg  \cite{D}; for chord-arc domains by G. David and D. Jerison \cite{DJe}, and
independently, by S. Semmes \cite{Se}; for NTA domains without an ADR hypothesis by
M. Badger \cite{Ba}; for the Riesz
measure associated to the $p$-Laplacian in chord-arc domains, by J. Lewis and the fourth
named author of this paper \cite{LN}; for domains with ADR boundaries, satisfying
an ``interior big pieces of Lipschitz
sub-domains" condition,
by B. Bennewitz and J. Lewis \cite{BL};
and for 1-sided chord-arc domains with uniformly rectifiable boundaries
(see Definition \ref{def1.UR} below), by the second and third named authors of this paper \cite{HM-URHM}.   In addition,
the second and third authors of this paper, jointly with I. Uriarte-Tuero \cite{HMU}, have obtained
a free boundary result that is a converse to the theorem of \cite{HM-URHM}, in which scale invariant
absolute continuity of harmonic measure with respect to surface measure, in the presence of
the 1-sided chord-arc condition, implies uniform rectifiability of $\pom$.

It turns out, perhaps surprisingly, in light of the counter-example of T. Hrycak
(see \cite{DS2}), that the result of \cite{HM-URHM} may be subsumed in that of \cite{DJe} and \cite{Se},
while on the other hand, the free boundary result of \cite{HMU} is stronger than the authors had initially realized.
Our main result in this paper is the following.  
\begin{theorem}\label{t1}
Suppose that $\om\subset \ree$ is a
uniform (aka 1-sided NTA) domain,
and that $\pom$ is uniformly rectifiable.  Then $\om$ is a chord-arc domain.
\end{theorem}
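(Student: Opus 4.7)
The only missing ingredient is the exterior Corkscrew condition, since uniform rectifiability of $\pom$ entails ADR, and the remaining NTA properties (interior Corkscrew, Harnack chains) are built into the hypothesis that $\om$ is uniform. So the plan is to show that for every $x\in\pom$ and every sufficiently small $r$, there exists a ball $B(y,cr)\subset B(x,r)\setminus\overline{\om}$ with a uniform constant $c$.

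The natural strategy is to use a corona-type decomposition of $\pom$ coming from uniform rectifiability. Working with a dyadic system $\dd$ of Christ cubes on $\pom$, UR guarantees that for every $Q_0\in\dd$ one can find a family $\F\subset\dd(Q_0)$ of stopping-time cubes such that (i) the cubes of $\F$ have Carleson-packing, (ii) on the complementary sawtooth region associated with a ``good'' sub-collection, $\pom$ is close (in a quantitative $\beta$-number sense) to a Lipschitz graph $\Gamma$ with small slope. For such a ``good'' $Q$, I would build the sub-domain $\omfq$ (for which the paper already reserves a macro) as the one-sided sawtooth inside $\om$ bounded below by the portion of $\pom$ over $Q$ and above by a Whitney-type surface staying in $\om$; the interior Corkscrew plus Harnack chain conditions of $\om$ ensure that the construction produces a \emph{non-degenerate} domain. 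The key analytical/geometric step is then to verify that $\omfq$ itself is a chord-arc domain with constants independent of $Q$: ADR and interior Corkscrew/Harnack chain for $\omfq$ follow from those of $\om$ together with the Whitney decomposition, while the exterior Corkscrew condition for $\omfq$ comes for free from the Lipschitz approximation --- one simply drops across $\Gamma$ from the Corkscrew point $\hx$.

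Once $\omfq$ is known to be chord-arc uniformly, the exterior Corkscrew for $\om$ at the centre of $Q$ is obtained by a topological/transfer argument: at a point $\hx$ sitting above a ``flat'' piece $F\subset Q\cap\Gamma$, the Lipschitz graph approximation places $\omfq$ strictly above $\Gamma$, so a ball of radius $\sim\diam Q$ lying just below $\Gamma$ is exterior to $\omfq$. I would show this ball is actually exterior to $\om$ itself by connectedness: if it met $\om$, then the interior Harnack chain condition would allow us to connect it to a faraway interior Corkscrew of $\om$ by a curve staying well away from $\pom$, but such a curve would have to cross the portion of $\pom$ lying in a small neighbourhood of $\Gamma$, contradicting the smallness of the graph approximation error on $F$. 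The Carleson packing of $\F$ and the usual ``big pieces'' machinery then upgrade exterior Corkscrews at the centres of the ``good'' cubes $Q$ into exterior Corkscrews at every point $x\in\pom$ and every scale $r\le\diam\pom$, completing the proof.

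The main obstacle is the last topological claim: turning an exterior Corkscrew of the auxiliary sub-domain $\omfq$ into an exterior Corkscrew of $\om$. This requires careful quantitative control on how tightly $\pom$ shadows the Lipschitz graph $\Gamma$ on the good part of the stopping-time regime, so that no interior point of $\om$ can ``sneak around'' $\Gamma$ into the candidate exterior Corkscrew ball without producing a Harnack chain that violates the $\beta$-estimate defining the regime. Getting the packing/good-cube parameters compatible with the Harnack chain constants is the delicate part; once that is done, the rest is an organised bookkeeping of the corona decomposition together with the sawtooth construction already encoded in the paper's notation.
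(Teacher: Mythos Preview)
Your central topological idea --- if a candidate exterior ball were actually contained in $\Omega$, one could join it by a good curve (Harnack chain) to a known interior point on the other side of the approximating surface, and that curve would be forced to pass within $\varepsilon r$ of $\partial\Omega$, contradicting the lower bound on $\delta(Z)$ along good curves --- is correct and is exactly the heart of the paper's geometric proof (Section~4, Lemma~\ref{l:mainlemma}).  What you have wrapped around it, however, is unnecessary.  The paper uses no corona decomposition, no Lipschitz graph, and no sawtooth sub-domains: it simply invokes the BWGL characterization of UR (Theorem~\ref{theor:UR-2}) to find, inside any ball $B(x,r)$, a sub-ball $B(x_1,r_1)$ with $r_1\approx r$ and $b\beta(x_1,r_1)<\varepsilon$.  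On that single flat scale the good-curve contradiction is carried out in a few lines, and the exterior Corkscrew at $(x,r)$ follows immediately.  Note in particular that your intermediate goal ``$\Omega_{\F',Q}$ is chord-arc'' is never invoked in your own transfer argument, so that entire step can be dropped; and the ``big pieces'' bookkeeping you anticipate collapses to a single use of the Carleson estimate \eqref{e:carleson}.  One genuine care point you would have to address in your longer route, and which the paper's short argument handles automatically by placing $X^\pm$ at distance $\approx r/(2C+1)$ from $x$, is ensuring that the good curve stays inside the region where the flatness estimate is valid.

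The paper also gives a second, entirely different proof (Section~3): using the single-layer-potential characterization of UR, one shows that the collection of dyadic cubes lacking an exterior Corkscrew satisfies a Carleson packing condition, by writing $\sigma(\Delta)\approx\langle \mathcal{L}\,\mathcal{S}1,\Phi\rangle$, integrating by parts, and bounding the main term by the UR square-function estimate \eqref{eq1.sf}.  Neither of the paper's proofs constructs auxiliary sub-domains.
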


Let us point out that both of our hypotheses are essential, in the sense that neither one,
alone, allows one to draw the stated conclusion.  Indeed,
as we have noted above, a 1-sided NTA domain, with
ADR boundary, need not be NTA in general:
in fact there may be no exterior domain.
On the other hand, Hrycak's example shows that
uniform rectifiability, in general, is strictly weaker than a
``Big Pieces of Lipschitz Graphs" condition\footnote{However,
as shown by the first named author of this paper and R. Schul \cite{AS},
uniform rectifiability is equivalent to a ``Big Pieces of Big Pieces of Lipschitz Graphs'' condition.},
whereas the latter property is enjoyed
by boundaries of chord-arc domains, by the main geometric result of \cite{DJe}.

Theorem \ref{t1}, combined with previous work mentioned above,
has implications for theorems of F. and M. Riesz type, and for
certain free boundary problems.  We now have the following.
\begin{theorem}\label{t2}
Suppose that $\om\subset \ree$ is a uniform (aka 1-sided NTA) domain, whose boundary is Ahlfors-David regular.
Then the following are equivalent:
\begin{enumerate}\itemsep=0.05cm
\item $\pom$ is uniformly rectifiable.
\item $\om$ is an NTA domain, and hence, a chord-arc domain.
\item $\hm\in A_\infty$.
\item $\hm \in $ weak-$A_\infty$.
\end{enumerate}
\end{theorem}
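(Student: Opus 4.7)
The plan is to establish the circle of implications $(1)\Rightarrow(2)\Rightarrow(3)\Rightarrow(4)\Rightarrow(1)$, three of which are direct citations to results already summarized in the introduction, while the last requires a careful re-reading of the argument of \cite{HMU}.

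First, $(1)\Rightarrow(2)$ is exactly Theorem \ref{t1}: the standing hypothesis that $\Omega$ is 1-sided NTA combined with uniform rectifiability of $\pom$ produces exterior Corkscrew points at every scale, upgrading $\Omega$ to an NTA domain; together with the ADR hypothesis this makes $\Omega$ chord-arc. Next, $(2)\Rightarrow(3)$ is the F.\ and M.\ Riesz type theorem for chord-arc domains proved by David and Jerison \cite{DJe} and independently by Semmes \cite{Se}, which asserts that on any chord-arc domain, harmonic measure $\hm$ lies in $A_\infty$ with respect to surface measure. The implication $(3)\Rightarrow(4)$ is trivial, since by definition $A_\infty\subset\,$weak-$A_\infty$.

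The substantive direction is $(4)\Rightarrow(1)$. Here I would invoke the free boundary result of \cite{HMU}, which asserts that if $\Omega$ is 1-sided chord-arc and $\hm\in A_\infty(\sigma)$, then $\pom$ is uniformly rectifiable. The remark in the introduction that \cite{HMU} is \emph{stronger than the authors had initially realized} suggests that, upon inspection, the hypothesis of absolute continuity on all of $\pom$ is never actually used: the proof proceeds by a stopping-time construction in which one extracts, at each stage, a subset of a surface ball on which the Poisson kernel satisfies a scale-invariant reverse Hölder inequality, and then builds an interior Lipschitz sub-domain over such a subset. Each of these reverse Hölder estimates is localized to a set on which $\hm$ already dominates $\sigma$, and that is precisely what the weak-$A_\infty$ hypothesis provides.

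The main obstacle is therefore to carry out this verification cleanly: to isolate, in the proof of \cite{HMU}, every place where the hypothesis $\hm\in A_\infty$ is used, and to check that each such use is in fact confined to good sets on which the weak-$A_\infty$ assumption supplies the same bound. Once this recasting is in place, the four conditions form a closed circle, and as a bonus the chain $(4)\Rightarrow(1)\Rightarrow(2)$ delivers the new information that weak-$A_\infty$ of harmonic measure, combined with the 1-sided chord-arc hypothesis, automatically forces the existence of exterior Corkscrews and hence full NTA; this upgrade is precisely the contribution of Theorem \ref{t1} to the equivalence.
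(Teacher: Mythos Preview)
Your circle of implications $(1)\Rightarrow(2)\Rightarrow(3)\Rightarrow(4)\Rightarrow(1)$ and the citations for each step match the paper's argument exactly. However, you have misread what \cite{HMU} actually proves and consequently invented an obstacle that is not there. The paper states plainly that ``(4) implies (1) is the main result of \cite{HMU}'': that paper already works under the weak-$A_\infty$ hypothesis (its title is ``Poisson kernels in $L^p$ imply uniform rectifiability,'' which is precisely the scale-invariant reverse H\"older condition characterizing weak-$A_\infty$). No re-examination of the proof in \cite{HMU} is required, and your proposed program of isolating every use of $A_\infty$ and replacing it by weak-$A_\infty$ is unnecessary.

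Relatedly, you have misinterpreted the remark that the free boundary result of \cite{HMU} is ``stronger than the authors had initially realized.'' This does \emph{not} refer to a weakening of the hypothesis from $A_\infty$ to weak-$A_\infty$; it refers to a strengthening of the \emph{conclusion}: combining $(4)\Rightarrow(1)$ from \cite{HMU} with the new implication $(1)\Rightarrow(2)$ from Theorem~\ref{t1} yields $(4)\Rightarrow(2)$, i.e., weak-$A_\infty$ forces the full NTA structure, not merely uniform rectifiability. You do note this upgrade in your final sentence, but you attribute the ``surprise'' to the wrong place.
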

Here $\hm$ denotes harmonic measure for $\om$ with some fixed pole, and the statements (3) and (4)
are understood in a scale invariant sense (see, e.g., \cite{HM-URHM, HMU}).  That (2) implies (3)
was proved in \cite{DJe}, and independently, in \cite{Se}; it is of course trivial that (3) implies (4),
while (4) implies (1) is the main result of \cite{HMU}.  Theorem \ref{t1} closes the circle by establishing
that (1) implies (2).  In particular, this yields a sharpened version of the free boundary result of \cite{HMU}, namely,
we now
obtain that (4) implies (2), under the stated background hypotheses.

In connection with the previous result we recall the David-Semmes conjecture recently proved in \cite{NToV} (see also \cite{HMM} for the case of a boundary of a 1-sided NTA domain with ADR boundary) which establishes that uniform rectifiability is equivalent to the boundedness of the Riesz transform. Hence Theorem \ref{t1} implies that, under the same background hypothesis, the Riesz transform is bounded on $L^2(\pom)$ if and only if $\Omega$ is an NTA domain and consequently a chord-arc domain.

In Section 2, we establish notation and review some necessary definitions and
preliminary results. In particular, we recall two different
characterizations of uniform rectifiability, one {\it analytic} and the other
{\it geometric}. In Sections 3 and 4, we give two separate proofs of
Theorem \ref{t1}, each one using a different characterization.
Finally, in an appendix, we provide the proof of a folklore theorem concerning the equivalence
of 1-sided NTA  and uniform domains.

\section{Preliminaries}\label{section:prelim}

\subsection{Notation and conventions}

\begin{list}{$\bullet$}{\leftmargin=0.4cm  \itemsep=0.2cm}

\item We use the letters $c,C$ to denote harmless positive constants, not necessarily the same at each occurrence, which depend only on dimension and the
constants appearing in the hypotheses of the theorems (which we refer to as the ``allowable parameters'').  We shall also sometimes write $a\lesssim b$ and $a \approx b$ to mean, respectively, that $a \leq C b$ and $0< c \leq a/b\leq C$, where the constants $c$ and $C$ are as above, unless
explicitly noted to the contrary.   Unless otherwise specified upper case constants are greater than $1$  and lower case constants are smaller than $1$.

\item Given a domain $\Omega \subset \ree$, we shall
use lower case letters $x,y,z$, etc., to denote points on $\partial \Omega$, and capital letters
$X,Y,Z$, etc., to denote generic points in $\ree$ (especially those in $\ree\setminus \partial\Omega$).

\item The open $(n+1)$-dimensional Euclidean ball of radius $r$ will be denoted
$B(x,r)$ when the center $x$ lies on $\partial \Omega$, or $B(X,r)$ when the center
$X \in \ree\setminus \partial\Omega$.  A {\it surface ball} is denoted
$\Delta(x,r):= B(x,r) \cap\partial\Omega.$

\item If $\pom$ is bounded, it is always understood (unless otherwise specified) that all surface balls have radii controlled by the diameter of $\pom$: that is if $\Delta=\Delta(x,r)$ then $r\lesssim \diam(\pom)$. Note that in this way $\Delta=\pom$ if $\diam(\pom)<r\lesssim \diam(\pom)$.



\item For $X \in \ree$, we set $\delta(X):= \dist(X,\partial\Omega)$.

\item We let $H^n$ denote $n$-dimensional Hausdorff measure, and let
$\sigma := H^n\big|_{\partial\Omega}$ denote the surface measure on $\partial \Omega$.

\item For a Borel set $A\subset \ree$, we let $1_A$ denote the usual
indicator function of $A$, i.e. $1_A(x) = 1$ if $x\in A$, and $1_A(x)= 0$ if $x\notin A$.



\item For a Borel subset $A\subset\partial\Omega$, we
set $\fint_A f d\sigma := \sigma(A)^{-1} \int_A f d\sigma$.

\item We shall use the letter $I$ (and sometimes $J$)
to denote a closed $(n+1)$-dimensional Euclidean cube with sides
parallel to the co-ordinate axes, and we let $\ell(I)$ denote the side length of $I$.
We use $Q$ to denote a dyadic ``cube''
on $\partial \Omega$.  The
latter exist, given that $\partial \Omega$ is ADR  (cf. \cite{DS1}, \cite{Ch}), and enjoy certain properties
which we enumerate in Lemma \ref{lemma:Christ} below.
\end{list}

\subsection{Some definitions}\label{ssdefs} 

\begin{definition}[\bf Ahlfors-David regular]\label{def1.ADR}
 We say that a closed set $E \subset \ree$ is $n$-dimensional ADR (or simply ADR) (Ahlfors-David regular) if
there is some uniform constant $C$ such that
\begin{equation} \label{eq1.ADR}
\frac1C\, r^n \leq H^n(E\cap B(x,r)) \leq C\, r^n,\,\,\,\forall r\in(0,R_0),x \in E,
\end{equation}
where $R_0$ is the diameter
of $E$ (which may be infinite).  
\end{definition}

\begin{definition}[\bf Uniform Rectifiability]\label{def1.UR}
Following David
and Semmes \cite{DS1}, \cite{DS2}, we say that a closed set $E\subset \ree$ is $n$-dimensional UR (or simply UR) (Uniformly Rectifiable), if
it satisfies the ADR condition \eqref{eq1.ADR}, and if for some uniform constant $C$ and for every
Euclidean ball $B:=B(x_0,r), \, r\leq \diam(E),$ centered at any point $x_0 \in E$, we have the Carleson measure estimate
\begin{equation}\label{eq1.sf}\iint_{B}
|\nabla^2 \mathcal{S}1(X)|^2 \,\dist(X,E) \,dX \leq C r^n,
\end{equation}
where $\mathcal{S}f$ is the harmonic single layer potential of $f$, i.e.,
\begin{equation}\label{eq1.layer}
\mathcal{S}f(X) :=c_n\, \int_{E} |X-y|^{1-n} f(y) \,dH^n(y).
\end{equation}
Here, the normalizing constant $c_n$ is chosen so that
$\mathcal{E}(X) := c_n |X|^{1-n}$ is the usual fundamental solution for the Laplacian
in $\ree$ (one should use a logarithmic potential in ambient dimension $n+1=2$).
\end{definition}

A geometric characterization of uniform rectifiability can be given in terms of the so called
{\it bilateral $\beta$-numbers}. For $x\in E$, a hyperplane $P$,
and $r>0$ we set \[\bbeta_{E}(x,r,P)=r^{-1}\left(\sup_{y\in
E\cap B(x,r)}\dist(y,P)+ \sup_{y\in P\cap
B(x,r)}\dist(y,E)\right)\]
and then define
\[\bbeta_{E}(x,r)=\inf_{P}\bbeta_E(x,r,P)\]
where the infimum is over all $n$-dimensional hyperplanes
$P\subseteq\mathbb{R}^{n+1}$.

\begin{definition}[\bf BWGL]\label{defi:BWGL}
We say that an $n$-dimensional ADR set $E$
satisfies the {\it bilateral weak geometric lemma} or {\it
BWGL} if, for each $\varepsilon>0$,
the set
\[
\widehat{B}_{\varepsilon}:=
\big\{(x,r):x\in E, r>0,\bbeta_E(x,r)\geq\varepsilon\big\}
\]
is a {\it Carleson set}, i.e.,  there is $C_{1}>0$ so that if we define
\[
\widehat{\sigma}(A)=\iint_{A}dH^n\frac{dt}{t},
\qquad A\subset E\times(0,\infty),
\]
then
\begin{equation}\label{e:carleson}
\widehat{\sigma}
\left(\widehat{B}_{\varepsilon}\cap \big(B(x,r)\times (0,r)\big)\right)
\leq C_{1}r^{n}
\end{equation} for all $x\in E$ and $r>0$.
\end{definition}

\begin{theorem}[{\cite[Theorem 2.4, Part I]{DS2}}]\label{theor:UR-2}
An $n$-dimensional ADR set
$E$ is uniformly rectifiable if and only if it
satisfies the BWGL.
\end{theorem}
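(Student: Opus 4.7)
The plan is to prove both implications by passing through the intermediate notion of a \emph{corona decomposition by Lipschitz graphs}, which is a fundamental alternative characterization of uniform rectifiability from \cite{DS1, DS2}. Such a decomposition partitions the dyadic cubes on $E$ into a Carleson-packed family of ``bad'' cubes, together with a family of stopping-time trees (\emph{stopping-time regions}) $\mathcal{S}$, each associated with a Lipschitz graph $\Gamma_{\mathcal{S}}$ whose Hausdorff distance to $E$ is controlled uniformly inside every cube of $\mathcal{S}$. Granted this equivalence, the main content reduces to showing UR (via such a decomposition) is equivalent to BWGL.

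For UR $\Rightarrow$ BWGL, fix a stopping-time region $\mathcal{S}$ associated to the graph $\Gamma_{\mathcal{S}}$ of a Lipschitz function over some plane $P_0$, and for $Q\in\mathcal{S}$ with center $x_Q$ take $P_Q$ to be the translate of $P_0$ through $x_Q$. The outer sup $\sup_{y\in E\cap B(x_Q,r)}\dist(y,P_Q)$ is controlled by the distance of $E$ to $\Gamma_{\mathcal{S}}$ inside a slightly enlarged ball, plus a term of order (Lipschitz constant)$\cdot r$. The inner sup $\sup_{y\in P_Q\cap B(x_Q,r)}\dist(y,E)$ is controlled by first comparing $P_Q$ to $\Gamma_{\mathcal{S}}$ (using the Lipschitz bound) and then invoking the ADR of $E$ to rule out holes. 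A routine refinement shows $\bbeta_E(x_Q,r)<\eps$ on all but a small subfamily of $\mathcal{S}$ with a Carleson norm bounded in terms of the Lipschitz constant of $\Gamma_{\mathcal{S}}$. Adding in the Carleson-packed bad cubes of the corona decomposition yields the required estimate \eqref{e:carleson} for $\widehat{B}_{\eps}$.

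For the converse BWGL $\Rightarrow$ UR, I would construct a corona decomposition by Lipschitz graphs directly from the bilateral $\beta$-control. Starting from a top cube $Q_0$, pick a best approximating plane $P_0$ at its scale and run a stopping-time descent: stop at the first descendant $Q$ for which either (a) $\bbeta_E(x_Q,\ell(Q))\geq\eps$ (a \emph{flatness stop}), or (b) a best approximating plane at $Q$ tilts by more than a fixed small angle $\theta$ from $P_0$ (a \emph{tilt stop}). Inside the resulting stopping-time region $\mathcal{S}$, use the two-sided $\beta$-control at the surviving cubes to construct (for instance by averaging suitable cut-offs of the closest point projection of $E$ onto $P_0$) a Lipschitz function $\varphi_{\mathcal{S}}\colon P_0 \to P_0^{\perp}$ whose graph $\Gamma_{\mathcal{S}}$ approximates $E$ at all scales down to the stopping generation. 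Packing of flatness stops is immediate from \eqref{e:carleson}; packing of tilt stops follows from a discrete gradient estimate saying that when $\bbeta$ is small at two nested cubes, the associated planes differ by at most $O(\bbeta)$ in angle, which bounds the tilt budget by a telescoping $L^{2}$-sum of bilateral $\beta$-numbers, hence again by \eqref{e:carleson}.

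The main obstacle is ensuring that within each stopping-time region the approximating set is genuinely a \emph{graph} of a single-valued Lipschitz function over $P_0$, with no folding or multi-sheeted behavior. Here the bilateral nature of BWGL is decisive: the second supremum in the definition of $\bbeta_E$ forces $E$ to occupy \emph{every} neighborhood of the approximating plane, so if $E$ were to fold back over $P_0$, it would either create extraneous sheets that could not be accommodated by the flatness $\bbeta<\eps$, or it would force a large tilt at some intermediate scale, triggering a tilt stop. Making this precise requires a careful choice $\eps\ll\theta\ll 1$ and a stopping rule that precludes nearby cubes of $\mathcal{S}$ from carrying nearly orthogonal approximating planes; once this is established, the Lipschitz graphs $\Gamma_{\mathcal{S}}$ give a bona fide corona decomposition and hence UR by \cite{DS1, DS2}.
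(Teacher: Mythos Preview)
The paper does not give a proof of this theorem; it is quoted verbatim as a known result from \cite[Theorem 2.4, Part I]{DS2} and invoked as a black box in Section~4.  There is therefore nothing in the present paper to compare your proposal against.

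For what it is worth, your outline is broadly faithful to the strategy actually used in \cite{DS1,DS2}: uniform rectifiability is characterized there by the existence of a corona decomposition, and the equivalence with BWGL is established by passing through that intermediate notion in both directions, much as you describe.  The obstacle you flag---ensuring that inside each stopping-time region the approximation is a genuine single-valued Lipschitz graph over a fixed plane, with no folding---is indeed the heart of the matter in the BWGL $\Rightarrow$ UR direction, and your heuristic (that bilateral flatness at all surviving scales, together with a tilt stop at angle $\theta\gg\eps$, precludes multi-sheeted behavior) is the right intuition.  Turning that heuristic into a proof, however, requires a careful construction of the Lipschitz function and a nontrivial packing argument for the tilt stops; this occupies a significant portion of \cite{DS2} and is not something one can simply assert.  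As a sketch your proposal is reasonable, but it is a sketch of the David--Semmes proof rather than an alternative to it.
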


\begin{remark} We note that there are numerous characterizations of uniform rectifiability
given in \cite{DS1,DS2};  the two stated above will be most useful for our purposes, and
appear in \cite[Chapter 2, Part I]{DS2} and \cite[Chapter 3,  Part III]{DS2}.
\end{remark}

We recall
that the UR sets
are precisely those for which all ``sufficiently nice'' singular integrals
are bounded on $L^2$ (see \cite{DS1}).  We further remark that uniform rectifiability
is the scale invariant version of
rectifiability:
in particular, BWGL may be viewed
as a quantitative version of the characterization of rectifiable sets in terms of the
existence a.e. of approximate tangent planes.  In this context, see also \cite{J1}.

\begin{definition}[\bf Corkscrew condition]\label{def1.cork}
Following \cite{JK}, we say that a domain $\Omega\subset \ree$
satisfies the {\it Corkscrew condition} if for some uniform constant $c>0$ and
for every surface ball $\Delta:=\Delta(x,r),$ with $x\in \partial\Omega$ and
$0<r<\diam(\partial\Omega)$, there is a ball
$B(X_\Delta,cr)\subset B(x,r)\cap\Omega$.  The point $X_\Delta\subset \Omega$ is called
a {\it corkscrew point relative to} $\Delta,$ (or, relative to $B$). We note that  we may allow
$r<C\diam(\pom)$ for any fixed $C$, simply by adjusting the constant $c$.
\end{definition}

\begin{definition}[\bf Harnack Chain condition]\label{def1.hc}
Again following \cite{JK}, we say
that $\Omega$ satisfies the {\it Harnack Chain condition} if there is a uniform constant $C$ such that
for every $\rho >0,\, \Lambda\geq 1$, and every pair of points
$X,X' \in \Omega$ with $\delta(X),\,\delta(X') \geq\rho$ and $|X-X'|<\Lambda\,\rho$, there is a chain of
open balls
$B_1,\dots,B_N \subset \Omega$, $N\leq C(\Lambda)$,
with $X\in B_1,\, X'\in B_N,$ $B_k\cap B_{k+1}\neq \emptyset$
and $C^{-1}\diam (B_k) \leq \dist (B_k,\partial\Omega)\leq C\diam (B_k).$  The chain of balls is called
a {\it Harnack Chain}.
\end{definition}

\begin{definition}[\bf 1-sided NTA]\label{def1.1nta}
If $\Omega$ satisfies both the Corkscrew and Harnack Chain conditions, then we say that
$\Omega$ is a {\it 1-sided NTA domain}.
\end{definition}

\begin{remark}
We observe that the 1-sided NTA condition is a quantitative connectivity condition.
\end{remark}

An alternative (and quantitatively equivalent)
definition is as follows.

\begin{definition}[\bf Uniform domain]\label{unif-dom}
For $0<c<1<C$, we say that an open set $\Omega\subseteq\mathbb{R}^{n+1}$
is a {\it $(c,C)$-uniform domain} if, for any two
points $X,Y\in \Omega$, there is a path $\gamma$ so that
\begin{enumerate}\itemsep=0.05cm
  \item $\ell(\gamma)\leq C|X-Y|$, where $\ell(\gamma)$ denotes the length of
  $\gamma$, and
  \item for any $Z\in\gamma$, $\delta(Z)\geq
  c\dist(Z,\{X,Y\})$.
\end{enumerate}
We call such a curve a {\it good curve} for $X$ and $Y$.
\end{definition}

It is well known that Definitions \ref{def1.1nta} and \ref{unif-dom}
are equivalent, nevertheless finding a precise reference is difficult. That every uniform domain is a 1-sided NTA domain was proved by Gehring and Osgood
\cite{GO}.   Indeed,  the {\it extension domains} of P. Jones
\cite{J2} are clearly 1-sided NTA by definition, while in
\cite{GO}, the authors prove that extension domains are the same as uniform domains
(see also \cite{V}).
For a more direct proof of the fact that uniform domains are 1-sided NTA domains,
see \cite[Lemma 4.2 and 4.3]{BS},  as well as their proofs. For completeness,
we include the converse implication below.

\begin{theorem}\label{1-NTA-to-uniform}
If $\Omega$ is a 1-sided NTA domain then $\Omega$ is a uniform domain
with constants that only depend on the 1-sided NTA constants.
\end{theorem}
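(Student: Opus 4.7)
The plan is to build, for any two points $X,Y\in\Omega$, a ``good curve'' in the sense of Definition \ref{unif-dom} by concatenating a sequence of Harnack Chains joining Corkscrew points placed at dyadic scales shrinking toward the nearest boundary points to $X$ and to $Y$. The length bound will come from summing a geometric series in the scale parameter, while the cigar inequality $\delta(Z)\geq c\,\dist(Z,\{X,Y\})$ will follow because at each generation the corkscrew points lie at distance and at depth both comparable to the dyadic scale at hand.

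Concretely, fix $X,Y\in\Omega$, set $\rho:=|X-Y|$, and pick $x,y\in\pom$ with $|X-x|=\delta(X)$ and $|Y-y|=\delta(Y)$. For each integer $k\geq 0$ set $r_k:=2^{-k}\rho$ and let $X_k$ (respectively $Y_k$) be a Corkscrew point for the surface ball $\Delta(x,r_k)$ (respectively $\Delta(y,r_k)$); thus $|X_k-x|<r_k$, $\delta(X_k)\geq c\,r_k$, and likewise for $Y_k$. Let $K$ be the largest $k\geq 0$ with $r_k>\delta(X)$, understood to make this portion empty when $\delta(X)\geq\rho$, and define $K'$ analogously using $\delta(Y)$. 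The curve $\gamma$ is then built in three stages: a ``fine'' portion going from $X$ to $X_K$ and then stepping $X_k\mapsto X_{k-1}$ down to $X_0$; a ``coarse'' middle portion from $X_0$ to $Y_0$; and a symmetric fine portion from $Y_0$ up to $Y$. Each elementary transition is realized by the Harnack Chain condition, joining consecutive points by a polygonal arc through the centers of the overlapping Harnack balls (each segment stays inside its ball, so inside $\Omega$). For example, $X_k$ and $X_{k-1}$ both lie within distance $<2r_{k-1}$ of $x$ and both have $\delta\gtrsim r_k$, so the Harnack Chain condition applied at scale $\sim r_k$ with aperture $\Lambda=O(1)$ produces a chain of uniformly bounded cardinality whose centers can be joined by straight segments to form a sub-arc of length $\lesssim r_k$.

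Summing the lengths yields $\ell(\gamma)\lesssim \sum_{k=0}^{K}r_k+\rho+\sum_{k=0}^{K'}r_k\lesssim \rho$, which is condition (1) of Definition \ref{unif-dom}. For the cigar estimate, consider any $Z$ on the sub-arc built between $X_k$ and $X_{k-1}$. Every Harnack ball $B_j$ in this sub-chain satisfies $\diam(B_j)\approx\dist(B_j,\pom)\approx r_k$, hence $\delta(Z)\gtrsim r_k$. On the other hand the total Euclidean travel from $X$ to $Z$ along $\gamma$ is bounded by $\delta(X)+\sum_{j\geq k}r_j\lesssim r_k$, so $|Z-X|\lesssim r_k$, and consequently $\delta(Z)\geq c\,|Z-X|\geq c\,\dist(Z,\{X,Y\})$. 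The same bookkeeping works on the $Y$-side, and on the coarse middle arc both $\delta(Z)$ and $\dist(Z,\{X,Y\})$ are comparable to $\rho$.

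The main technical point to verify is that the Harnack Chain cardinality at each dyadic level is genuinely uniform in $k$; this is precisely why the scales are chosen so that the ratio $|X_k-X_{k-1}|/\min(\delta(X_k),\delta(X_{k-1}))$ is a fixed constant independent of $k$, so that the same $N=C(\Lambda)$ bound from Definition \ref{def1.hc} applies at every level. Once this is in place, geometric summation handles the length, and the cigar inequality becomes a clean consequence of the fact that near a boundary-approaching endpoint the depth $\delta(Z)$ and the Euclidean distance $|Z-X|$ degrade at the same dyadic rate.
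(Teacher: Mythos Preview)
Your proposal is correct and follows essentially the same route as the paper's proof: place Corkscrew points at dyadic scales between $\delta(X)$ (resp.\ $\delta(Y)$) and $|X-Y|$ relative to the nearest boundary point, connect consecutive ones by Harnack chains realized as polygonal arcs through the ball centers, and bridge the two sides at the top scale with one more Harnack chain. The paper organizes the argument slightly differently---it first isolates a lemma showing that a single Harnack chain already gives a good curve for its endpoints, and it treats the edge cases $\delta(X)\gtrsim |X-Y|$ via an explicit case split rather than your convention of declaring the fine portion empty---but the estimates (geometric summation for the length, $\delta(Z)\approx r_k$ on each sub-arc for the cigar bound) and the overall mechanism are identical.
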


We defer the proof of Theorem \ref{1-NTA-to-uniform} to an appendix.

\begin{definition}[\bf NTA domain]\label{def1.nta}
Following \cite{JK}, we say that a domain
$\Omega$ is an {\it NTA  domain} if
it is a 1-sided NTA domain and if, in addition, $\om_{\rm ext}:= \ree\setminus \overline{\Omega}$
also satisfies the Corkscrew condition.
\end{definition}

\begin{definition}[\bf Chord-arc domain]
$\Omega$ is a {\it chord-arc domain} if
it is an NTA domain with an ADR boundary.
\end{definition}



\subsection{Dyadic grids}

\begin{lemma}[\bf Existence and properties of the ``dyadic grid'']\label{lemma:Christ}
\cite{DS1,DS2}, \cite{Ch}.
Suppose that $E\subset \ree$ satisfies
the ADR condition \eqref{eq1.ADR}.  Then there exist
constants $ a_0>0,\, \eta>0$ and $C_1<\infty$, depending only on dimension and the
ADR constants, such that for each $k \in \mathbb{Z},$
there is a collection of Borel sets (``cubes'')
$$
\mathbb{D}_k:=\{Q_{j}^k\subset E: j\in \mathfrak{I}_k\},$$ where
$\mathfrak{I}_k$ denotes some (possibly finite) index set depending on $k$, satisfying

\begin{list}{$(\theenumi)$}{\usecounter{enumi}\leftmargin=.8cm
\labelwidth=.8cm\itemsep=0.2cm\topsep=.1cm
\renewcommand{\theenumi}{\roman{enumi}}}

\item $E=\cup_{j}Q_{j}^k\,\,$ for each
$k\in{\mathbb Z}$.

\item If $m\geq k$ then either $Q_{i}^{m}\subset Q_{j}^{k}$ or
$Q_{i}^{m}\cap Q_{j}^{k}=\emptyset$.

\item For each $(j,k)$ and each $m<k$, there is a unique
$i$ such that $Q_{j}^k\subset Q_{i}^m$.

\item Diameter $\left(Q_{j}^k\right)\leq C_12^{-k}$.

\item Each $Q_{j}^k$ contains some surface ball $\Delta \big(x^k_{j},a_02^{-k}\big):=
B\big(x^k_{j},a_02^{-k}\big)\cap E$.

\item $H^n\left(\left\{x\in Q^k_j:{\rm dist}(x,E\setminus Q^k_j)\leq \tau \,2^{-k}\right\}\right)\leq
C_1\,\tau^\eta\,H^n\left(Q^k_j\right),$ for all $k,j$ and for all $\tau\in (0,a_0)$.
\end{list}
\end{lemma}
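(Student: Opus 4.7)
The plan is to construct the dyadic grid on the ADR set $E$ by mimicking the tree structure of Euclidean dyadic cubes, replacing the exact halving of side lengths by an approximate halving that is compatible with the geometry of $E$. The construction will be combinatorial: first choose centers at each scale, then organize them into a tree, and finally Voronoi-partition $E$ according to the tree.

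Concretely, fix a small parameter $\delta\in(0,1/2)$ (for instance $\delta=1/100$; the constant $C_1$ in (iv) absorbs the conversion between $\delta^k$ and $2^{-k}$). For each $k\in\ZZ$ I would choose a maximal $\delta^k$-separated set of ``centers'' $\{x_j^k:j\in\mathfrak{I}_k\}\subset E$, so that the balls $B(x_j^k,\delta^k/2)$ are pairwise disjoint while $E\subseteq\bigcup_j B(x_j^k,\delta^k)$. Then I would build a parent map $\pi\colon\mathfrak{I}_k\to\mathfrak{I}_{k-1}$ by letting $\pi(j)$ be (a choice of) index minimizing $|x_j^k-x_i^{k-1}|$; iterating, this organizes the centers into an infinite tree. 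I would now define a preliminary cube $\widetilde{Q}_j^k$ as the set of points $y\in E$ whose ``descent chain'' at level $k$ (the level-$k$ center nearest to $y$, with ties broken lexicographically) is $x_j^k$. With $\delta$ small enough, the triangle inequality combined with $\delta^k$-separation forces $\widetilde{Q}_j^k\subset B(x_j^k,C\delta^k)$ and $\widetilde{Q}_j^k\supset B(x_j^k,c\delta^k)\cap E$, so properties (i)--(v) are essentially immediate from the construction.

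The main obstacle is property (vi), the \emph{small boundary property}. The issue is that the preliminary cubes $\widetilde{Q}_j^k$ can have very irregular boundaries, and the preimages of ``close-to-boundary'' points under the assignment map can in principle be large. The standard remedy, due to Christ, is not to modify the centers but to slightly \emph{shift} the cubes: one replaces $\widetilde{Q}_j^k$ by a set $Q_j^k$ obtained by re-annexing, at each descending scale $m\ge k$, those grandchildren $\widetilde{Q}_i^m$ whose centers lie within $\delta^m$ of $x_j^k$ (i.e.\ deep in the parent) and expelling those within $\delta^m$ of a sibling's center. One then estimates
\[
H^n\bigl(\{x\in Q_j^k:\dist(x,E\setminus Q_j^k)\le\tau 2^{-k}\}\bigr)
\le\sum_{m\ge k}H^n\bigl(\{\text{centers at level }m\text{ within }C\tau 2^{-k}\text{ of }\partial Q_j^k\}\bigr),
\]
and uses ADR (upper bound for the thin shells, lower bound for $Q_j^k$) to obtain geometric decay in $m-k$ whose sum gives $C_1\tau^\eta H^n(Q_j^k)$ for some $\eta=\eta(\delta)>0$. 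The delicate point is to choose $\delta$ small enough relative to the ADR constants that the geometric series converges with room to spare, yielding $\eta>0$; everything else in this step is bookkeeping.

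If instead I wanted a more conceptual proof, I would use the randomized construction of a dyadic grid (shift the tree by a random translation) and apply a first-moment/Fubini argument to show that \emph{some} deterministic shift produces cubes with the small-boundary bound (vi); this turns the combinatorics of the preceding paragraph into a probabilistic pigeonhole. Either way, the role of ADR is used essentially twice: once to produce enough room for the tree via maximal separated sets, and once again to convert the annular-shell estimates into the quantitative small-boundary bound that powers property (vi).
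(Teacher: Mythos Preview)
The paper does not prove this lemma at all: it is stated with attribution to \cite{DS1,DS2} and \cite{Ch}, and the text that follows consists only of remarks clarifying notation and conventions. So there is no ``paper's own proof'' to compare against; you are sketching a proof of a cited result.

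As a sketch of Christ's construction your outline is in the right spirit for properties (i)--(v): maximal $\delta^k$-separated nets, a parent map organizing them into a tree, and cubes defined by tracing descent chains. Your treatment of (vi), however, is not quite Christ's argument and, as written, is not correct. Christ does \emph{not} modify the preliminary cubes by ``re-annexing'' or ``expelling'' grandchildren; the cubes $Q_j^k$ are exactly the sets of points whose descent chain passes through $x_j^k$, and the small-boundary estimate is proved directly for these. The mechanism is different from what you describe: if $x$ lies in the $\tau$-boundary layer of $Q_j^k$, then for roughly $\log(1/\tau)$ consecutive scales $m\ge k$ the level-$m$ center nearest $x$ must have a competing level-$(m-1)$ center (not its parent) within distance $\approx\delta^{m-1}$. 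By the doubling property, at each scale a fixed proportion of points \emph{fail} to be in such a competitive position, and iterating gives geometric decay $C\tau^\eta$. Your displayed inequality, summing over ``centers at level $m$ within $C\tau 2^{-k}$ of $\partial Q_j^k$,'' does not capture this: the shell width should scale with $m$, not stay at $\tau 2^{-k}$, and the object being summed is not an $H^n$-measure of a set of centers.

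Your alternative suggestion of a randomized grid with a first-moment argument is a legitimate route (this is the Hyt\"onen--Kairema approach), and would in fact be cleaner than trying to repair the combinatorial sketch above.
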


A few remarks are in order concerning this lemma.

\begin{list}{$\bullet$}{\leftmargin=0.4cm  \itemsep=0.2cm}

\item In the setting of a general space of homogeneous type, this lemma has been proved by Christ
\cite{Ch}, with the
dyadic parameter $1/2$ replaced by some constant $\delta \in (0,1)$.
In fact, one may always take $\delta = 1/2$ (cf.  \cite[Proof of Proposition 2.12]{HMMM}).
In the presence of the Ahlfors-David
property (\ref{eq1.ADR}), the result already appears in \cite{DS1,DS2}.

\item  For our purposes, we may ignore those
$k\in \mathbb{Z}$ such that $2^{-k} \gtrsim {\rm diam}(E)$, in the case that the latter is finite.

\item  We shall denote by  $\mathbb{D}=\mathbb{D}(E)$ the collection of all relevant
$Q^k_j$, i.e., $$\mathbb{D} := \cup_{k} \mathbb{D}_k,$$
where, if $\diam (E)$ is finite, the union runs
over those $k$ such that $2^{-k} \lesssim  {\rm diam}(E)$.

\item Given a cube $Q\in \dd$, we set
 \begin{equation}\label{eq2.discretecarl}
\dd_Q:= \left\{Q'\in \dd: Q'\subseteq Q\right\},
\end{equation}

\item For a dyadic cube $Q\in \mathbb{D}_k$, we shall
set $\ell(Q) = 2^{-k}$, and we shall refer to this quantity as the ``length''
of $Q$.  Evidently, $\ell(Q)\approx \diam(Q).$


\item Properties $(iv)$ and $(v)$ imply that for each cube $Q\in\mathbb{D}_k$,
there is a point $x_Q\in E$, a Euclidean ball $B(x_Q,r_Q)$ and a surface ball
$\Delta(x_Q,r_Q):= B(x_Q,r_Q)\cap E$ such that
$c\ell(Q)\leq r_Q\leq \ell(Q)$, for some uniform constant $c>0$,
and \begin{equation}\label{cube-ball}
\Delta(x_Q,2r_Q)\subset Q \subset \Delta(x_Q,Cr_Q),\end{equation}
for some uniform constant $C$.
We shall denote this ball and surface ball by
\begin{equation}\label{cube-ball2}
B_Q:= B(x_Q,r_Q) \,,\qquad\Delta_Q:= \Delta(x_Q,r_Q),\end{equation}
and we shall refer to the point $x_Q$ as the ``center'' of $Q$.


\end{list}

It will be useful  to dyadicize the Corkscrew condition, and to specify
precise Corkscrew constants.
Let us now specialize to the case that  $E=\pom$ is ADR,
with $\Omega$ satisfying the Corkscrew condition.
Given $Q\in \mathbb{D}(\partial\Omega)$, we
shall sometimes refer to a ``Corkscrew point relative to $Q$'', which we denote by
$X_Q$, and which we define to be the corkscrew point $X_\Delta$ relative to the surface ball
$\Delta:=\Delta_Q$ (see \eqref{cube-ball}, \eqref{cube-ball2} and Definition \ref{def1.cork}).  We note that
\begin{equation}\label{eq1.cork}
\delta(X_Q) \approx \dist(X_Q,Q) \approx \diam(Q).
\end{equation}

\medskip

\begin{definition} ({\bf $c_0$-exterior
Corkscrew condition}).  \label{def1.dyadcork} Fix  a constant $c_0\in (0,1)$,
and a domain $\Omega\subset \ree$, with ADR boundary.  We say
that a cube $Q\in \dd(\pom)$ satisfies the
the $c_0$-exterior Corkscrew condition, if
there is a point $z_Q\in \Delta_Q$, and a point $X^-_Q\in B(z_Q,r_Q/4) \setminus \overline{\Omega}$, such that
$B(X^-_Q,\,c_0\,\ell(Q))\subset B(z_Q,r_Q/4)\setminus \overline{\Omega}$, where
$\Delta_Q=\Delta(x_Q,r_Q)$ is the surface ball defined above in \eqref{cube-ball}--\eqref{cube-ball2}.
\end{definition}


\section{The analytic proof of Theorem \ref{t1}}\label{s4}

We suppose that $\om$ is a 1-sided NTA (i.e., uniform) domain,
with uniformly rectifiable boundary.  It suffices to show that $\om$ satisfies an exterior Corkscrew condition
at all scales (up to the diameter of $\pom$).

To this end, we define a discrete measure $\mut$ as follows.
Let $\B=\B(c_0)$
denote the collection
of $Q\in\dd(\pom)$ for which the $c_0$-exterior Corkscrew condition (see
Definition \ref{def1.dyadcork}) fails.
Set
\begin{equation}\label{eq2.5}
\alpha_Q := \left\{
\begin{array}{ll}
\sigma(Q)\,,&
\,\,{\rm if\,} Q \in \B,
\\[6pt]
0\,,&\,\,
{\rm otherwise}\,.
\end{array}
\right.
\end{equation}
For any subcollection $\dd'\subset\dd(\pom)$, we set
\begin{equation}\label{mut-def}
\mut(\dd'):= \sum_{Q\in\dd'}\alpha_{Q}.
\end{equation}

We will prove, as a consequence of the UR  and 1-sided NTA properties, that the collection $\B$ satisfies a packing condition, i.e., that $\mut$ is a discrete
Carleson measure, provided that $c_0$ is small enough.
More precisely, we have the following.
\begin{lemma}\label{pack}  Let $\Omega$ be 
a 1-sided NTA domain with UR boundary, and let $\B=\B(c_0)\subset \dd$ be the
collection defined above.
Then there is a $c_0$ sufficiently small, such that
the measure $\mut$ satisfies the packing condition
\begin{equation}\label{eq4.4}
\sup_{Q\in\dd} \,\frac{ \mut(\dd_Q)}{\sigma(Q)}\,  =\, \sup_{Q\in\dd}\,\frac1{\sigma(Q)}
\sum_{Q'\in\B:\, Q'\subset Q} \sigma(Q') \,\leq M_1\,,
\end{equation}
where the  constants $c_0$ and $M_1$  depend only upon
dimension, and on the ADR/UR and 1-sided NTA constants.
\end{lemma}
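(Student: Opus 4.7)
The plan is to dichotomize $\B$ based on the bilateral $\beta$-number at a slightly enlarged scale, and handle each piece separately. Fix parameters $K \gg 1$ and $\varepsilon_0 \ll 1$ (to be chosen in terms of the $1$-sided NTA and ADR constants), and split $\B = \B_1 \sqcup \B_2$, where
\[
\B_1 := \{Q \in \B : \bbeta_{\pom}(x_Q, K\, r_Q) \geq \varepsilon_0\},
\qquad
\B_2 := \B \setminus \B_1.
\]

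For cubes in $\B_1$, the packing bound will follow directly from the BWGL characterization of uniform rectifiability (Theorem~\ref{theor:UR-2}): the set $\widehat{B}_{\varepsilon_0}$ is a Carleson set, so, using that each $Q\in\dd$ contains a surface ball of radius $\approx r_Q$ (Lemma~\ref{lemma:Christ}) and that $\sigma(Q)\approx r_Q^n$, a routine discretization yields
\[
\sum_{Q \in \B_1,\, Q \subset Q_0} \sigma(Q) \;\lesssim\; r_{Q_0}^{\,n} \;\approx\; \sigma(Q_0).
\]

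The main work is to show $\B_2 = \emptyset$ for an appropriate choice of $K$ and $\varepsilon_0$. Suppose $Q \in \B_2$, and let $P$ be a hyperplane realizing $\bbeta_{\pom}(x_Q, Kr_Q) < \varepsilon_0$. Since $\pom \cap B(x_Q, Kr_Q)$ lies in the slab $\{|d_P| < \varepsilon_0 K r_Q\}$ (where $d_P$ is signed distance to $P$), the two open half-regions
\[
B^{\pm} \;:=\; B(x_Q, Kr_Q/2) \cap \{X : \pm d_P(X) > 2 \varepsilon_0 K r_Q\}
\]
are each disjoint from $\pom$, and hence each lies entirely in $\Omega$ or in $\om_{\rm ext}$ by connectedness. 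The interior corkscrew $X_Q$ (at distance $\approx r_Q$ from $\pom$) lies on one side, say $B^+ \subset \Omega$. The claim is that $B^- \subset \om_{\rm ext}$: this would yield a ball of radius $\approx K r_Q \gg c_0 r_Q$ inside $\om_{\rm ext}$ near $\Delta_Q$, producing a $c_0$-exterior corkscrew for $Q$ and contradicting $Q \in \B$.

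To prove this claim, I argue by contradiction: if $B^- \subset \Omega$, pick $Y \in B^-$ with $\delta(Y) \approx K r_Q$. The Harnack Chain condition supplies a chain of $N \leq N(K)$ balls in $\Omega$ from $X_Q$ to $Y$ (since $|X_Q - Y| \lesssim K r_Q$ and $\min(\delta(X_Q),\delta(Y)) \gtrsim r_Q$), each with $\diam(B_j) \approx \dist(B_j,\pom)$. The chain must pass from $B^+$ to $B^-$, so one of two things happens: \textbf{(a)} some chain ball $B_j$ crosses $P$ within $B(x_Q,Kr_Q)$, in which case the vertical projection of its center onto $P$ lies in $P\cap B(x_Q,Kr_Q)$, and smallness of $\bbeta$ yields a point $w\in\pom$ within $\varepsilon_0 Kr_Q$ of it; a short computation relating the ball's radius $\rho_j$, its ``crossing depth'' across $P$, and $\varepsilon_0 Kr_Q$ forces $w\in B_j$, contradicting $B_j\subset\Omega$; or \textbf{(b)} the chain exits $B(x_Q,Kr_Q)$ entirely to go around, forcing $\sum \diam(B_j) \gtrsim K r_Q$, which is incompatible with $N\leq N(K)$ once $K$ is large relative to the Harnack chain constant. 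The main obstacle is the quantitative calibration of $K$ and $\varepsilon_0$ against the Harnack chain constant so that case (a) forces an incidence with $\pom$ and case (b) is geometrically impossible --- this is the technical heart of the lemma.
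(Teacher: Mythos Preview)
Your strategy is sound and is essentially the paper's \emph{geometric} proof (Section~4, via BWGL and Lemma~4.1), repackaged as a proof of Lemma~\ref{pack}; the paper's own proof of Lemma~\ref{pack} is completely different and \emph{analytic}: for each $Q\in\B$ it writes $\sigma(\Delta)\approx\langle\mathcal{L}\,\s1,\Phi\rangle$ for a cutoff $\Phi$, splits into interior and exterior pieces, uses a Poincar\'e-type inequality from \cite{HM-URHM} to control the interior piece by $\iint_{U_{B,\eps}}|\nabla^2\s1|^2\,\delta\,dX$, and absorbs the exterior piece using the failure of the $c_0$-exterior corkscrew (which forces $|B\setminus\Omega|\lesssim c_0 r^{n+1}$). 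The packing then follows from the Carleson estimate \eqref{eq1.sf}. So your route---BWGL in place of the single-layer Carleson measure---is genuinely different, and arguably more elementary.

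That said, your execution has a real circularity in case~(b). By choosing $Y\in B^-$ with $\delta(Y)\approx Kr_Q$ while $\delta(X_Q)\approx r_Q$, you force $\Lambda:=|X_Q-Y|/\min(\delta(X_Q),\delta(Y))\approx K$, so the Harnack chain has $N\le N(K)$ balls whose diameters are only comparable to $r_Q$ \emph{with constants depending on $K$} (cf.\ Lemma~\ref{harnack-lemma}). Thus $\sum_j\diam(B_j)$ can be of order $N(K)\cdot C(K)\,r_Q$, and there is no reason this is $\lesssim Kr_Q$; your clause ``incompatible with $N\le N(K)$ once $K$ is large'' does not close. Your case~(a) is also slightly off: from $B_j\cap P\ne\emptyset$ and $\bbeta<\varepsilon_0$ you get $\delta(c_j)\le |d_P(c_j)|+\varepsilon_0 Kr_Q<\rho_j+\varepsilon_0 Kr_Q$, which combined with $\delta(c_j)\ge\rho_j+\dist(B_j,\pom)\ge\rho_j(1+2C_0^{-1})$ only yields $\rho_j\lesssim\varepsilon_0 Kr_Q$, not $w\in B_j$.

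Both issues disappear if you pick $Y$ symmetrically: take $Y$ on the opposite side of $P$ from $X_Q$ at comparable height, say $|d_P(Y)|\approx r_Q$, so that $\delta(Y)\approx r_Q$ and $|X_Q-Y|\approx r_Q$. Then $\Lambda\approx 1$, the Harnack chain has a bounded number of balls all of radius $\approx r_Q$ with \emph{fixed} constants, and the whole chain sits in $B(x_Q,C'r_Q)$ for a fixed $C'$; choosing $K>2C'$ makes case~(b) vacuous. In case~(a), the polygonal path through the ball centers crosses $P$ at some $Z\in B(x_Q,Kr_Q)$ lying in one of the chain balls, whence $\delta(Z)\ge\dist(B_j,\pom)\gtrsim r_Q$, while $\bbeta<\varepsilon_0$ forces $\delta(Z)<\varepsilon_0 Kr_Q$; this is a contradiction for $\varepsilon_0 K$ small. (Equivalently, invoke Theorem~\ref{1-NTA-to-uniform} and use a good curve, exactly as the paper does in Lemma~4.1: the uniform-domain bound $\ell(\gamma)\le C|X-Y|$ with fixed $C$ is precisely what kills case~(b).)
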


Let us momentarily take the lemma for granted, and deduce the conclusion of
Theorem \ref{t1}.    We fix a cube $Q\in\dd(\pom)$, and we seek to show that
$\om_{\rm ext}$ has a Corkscrew point relative to $Q$.
Let $\Delta_Q\subset Q$ denote the surface ball defined in \eqref{cube-ball}--\eqref{cube-ball2},
and let $Q_1$ be a sub-cube of $Q$, of maximal size, that is contained in $\Delta_Q$.
We then have that $\ell(Q_1)\geq c\ell(Q)$.
By \eqref{eq4.4} (applied to $Q_1$), and the ADR condition,
there is a constant $c_1$, depending only on $M_1$ and the ADR constants, and
a cube $Q'\in \dd_{Q_1}\setminus \B$, with $\ell(Q')\geq c_1 \ell(Q)$.
Since $Q'\notin \B$, it therefore enjoys the $c_0$-exterior Corkscrew condition,
and therefore, so does $Q$, but with $c_0$ replaced by $c_0'=c_0c_1$.
Since every surface ball contain a cube of comparable diameter, this means that there is an exterior Corkscrew point relative to every surface ball on the boundary, and therefore $\om$ is NTA, and hence chord-arc.

It remains to prove the lemma.
\begin{proof}[Proof of Lemma \ref{pack}]
We follow a related argument in \cite{HM-URHM}, which in turn uses an
idea from \cite{DS2}.
Let $\B$ denote the collection
of $Q\in\dd$ for which the $c_0$-exterior Corkscrew condition (cf.
Definition \ref{def1.dyadcork})
fails.
We fix a cube $Q\in \B$,  a point $z_Q\in \Delta_Q\subset Q$, and set $B=B(z_Q,r/4)$,
with $r=r_Q\approx \ell(Q)$,
and set $\Delta=B\cap \pom$.
Let $\Phi\in C^\infty_0(B)$,
with $0\leq\Phi\leq 1$, $\Phi\equiv 1$ on $(1/2)B$,
and $\|\nabla\Phi\|_\infty \lesssim r^{-1}$.  Let $\mathcal{L}:=-\dv\cdot\nabla$
denote the usual Laplacian in $\ree$, and let $\s$ denote the single layer potential
for $\mathcal{L}$, as in \eqref{eq1.layer} with $E=\pom$.
Since $\pom$ is ADR,
\begin{multline}\label{eq2.21}
\sigma(\Delta) \approx \int_{\pom}\Phi \,d\sigma
=\left\langle \mathcal{L} \s1,\Phi\right\rangle
= \iint_{\ree}(\nabla \s1(X)-\vec{\beta})\cdot \nabla\Phi(X)\, dX\\[4pt]
\lesssim \frac1{r}\left(
 \iint_{B\setminus \om}|\nabla \s1(X)-\vec{\beta}\,| dX \,+\,
  \iint_{B\cap\om}|\nabla \s1(X)-\vec{\beta}\,| dX\right)\\[4pt]
  =:\,\frac1{r}\,\big(I+II\big)\,,
\end{multline}
where $\vec{\beta}$ is a constant vector at our disposal.  We let $B^*:= \kappa_0 B$
be a concentric dilate of $B$ with $\kappa_0$ a large constant (see \cite[(5.12)]{HM-URHM}), and set $\Delta^*:= B^*\cap\pom$.
As in the proof of \cite[Lemma 5.10]{HM-URHM}, we may choose $\vec{\beta}$
so that the following properties hold:
\begin{equation}\label{eq2.22}
\|\nabla \s 1_{\pom\setminus \Delta^*} -\vec{\beta}\,\|_{L^\infty(B) }\leq C\,,
\end{equation}
and
\begin{multline}\label{eq2.23}
r^{-1}   \iint_{B\cap(\om\setminus \Sigma_{B,\eps})}|\nabla \s1(X)-\vec{\beta}\,| dX\\[4pt]
\leq\, C_\eps \,r^{n/2}\left(\iint_{U_{B,\eps}}
|\nabla^2\s1(X)|^2 \delta(X) dX\right)^{1/2}\,,
\end{multline}
where $\Sigma_{B,\eps}$ is a ``border strip" of thickness $C\eps r$, and
$U_{B,\eps}$ is a Whitney region with $\eps r\lesssim \delta(X)\approx \dist(X,\Delta)
 \lesssim r$, for all $X\in U_{B,\eps}$, with $\eps$ a small positive constant
 to be chosen.  In the language of \cite[Sections 4 and 5]{HM-URHM},  $\Sigma_{B,\eps}=\Omega\setminus \Omega_{\F(\epsilon\,r),Q}$ with $\F=\emptyset$,
$U_{B,\eps}=\Omega_{\F(\epsilon\,r),Q}^{\rm fat}$ and $\vec{\beta}$ is the average of $\nabla S1$ on $\Omega_{\F(\epsilon\,r),Q}$.
 We observe that
 \eqref{eq2.23} is a consequence of a Poincar\'e inequality proved in \cite[Section 4]{HM-URHM}.
 Moreover, by \cite[Lemma 5.1, Lemma 5.3, and Corollary 5.6]{HM-URHM}, we have
  \begin{equation}\label{eq2.24}
  \iint_{B}|\nabla \s1_{\Delta^*}(X)|^q dX \leq C_q \, r^{n+1}\,,\quad 1\leq q<(n+1)/n\,,
  \end{equation}
  \begin{equation}\label{eq2.25}
  |\Sigma_{B,\eps} \cap B|\lesssim \eps r^{n+1}\,,
  \end{equation}
  and
 \begin{equation}\label{eq2.26}
  \iint_{B\cap \Sigma_{B,\eps}}|\nabla \s1_{\Delta^*}(X)| dX \lesssim \eps^\gamma r^{n+1}\,,
  \end{equation}
for some fixed $\gamma$, with $0<\gamma<1$.
Combining these facts, we see that
$$\frac{II}{r} \lesssim \eps^\gamma \sigma(\Delta) \,+\,  C_\eps\,  \sigma(\Delta)^{1/2}\left(\iint_{U_{B,\eps}}
|\nabla^2\s1(X)|^2 \delta(X) dX\right)^{1/2}\,.$$
Furthermore, by \cite[Lemma 5.7]{HM-URHM}, the failure of the $c_0$-exterior Corkscrew property
implies that
$$|B\setminus \om|\lesssim c_0 r^{n+1}\,.$$
Combining the latter fact with \eqref{eq2.22} and  \eqref{eq2.24}, we find that
$$\frac{I}{r} \,\lesssim \, c_0^{1/q'}\sigma(\Delta)\,.$$
If $\eps$ and $c_0$ are chosen small enough, then the small terms
may be hidden on the left hand side of \eqref{eq2.21},
to obtain that
\begin{equation}\label{eq2.27}
\sigma(\Delta) \lesssim \iint_{U_{B,\eps}}
|\nabla^2\s1(X)|^2 \delta(X) dX\,.
\end{equation}
As observed above, the measure $dm(X):=|\nabla^2\s1(X)|^2 \delta(X) dX$
is a Carleson measure in $\ree\setminus \pom$, since $\pom$ is UR (see \eqref{eq1.sf}).
Moreover, for the various balls $B=B(z_Q,r/4)$ under consideration,
the Whitney regions $U_{B,\eps}$ have bounded overlaps,
since each such region is associated to a cube $Q$ with $\ell(Q)\approx r$, and
$B\cap \pom\subset Q$ (in the language of \cite[Sections 3 and 4]{HM-URHM}, $U_{B,\eps}$ is a union of fattened Whitney boxes meeting $B^*$ whose side length is of the order of $\ell(Q)$).  In addition,
$$\sigma(Q) \lesssim \sigma(\Delta)\,.$$
Combining these observations with \eqref{eq2.27}, we obtain the desired packing condition. 
\end{proof}

\section{The geometric proof of Theorem \ref{t1}}

Suppose $\Omega$ is a $(c,C)$-uniform domain with ADR boundary $E:=\pom$. As mentioned
earlier, this is equivalent to being 1-sided NTA, so in particular, we will also use the fact
that $\Omega$ satisfies the corkscrew condition in Definition \ref{def1.cork}.
We can assume, by making numbers smaller if need it, that the constant
$c$  in the definition of the corkscrew condition is the same value as the $c$ in our definition of
$(c,C)$-uniform domains.

\begin{lemma}
There is $\varepsilon>0$ depending only on $c$ and $C$ such that the following holds.
Suppose $x\in \pom$, $r\in(0,{\diam(\pom)})$, and that $P$ is a hyperplane such
that $\bbeta(x,r,P)<\varepsilon$.
Let $v_{P}$ be a unit vector orthogonal to $P$. Define
\[
B^{\pm}(x,r)=B(x,r)\cap \{x+y: \pm y\cdot v_{P}> \varepsilon r\}\subseteq
\ree\setminus \pom.
\]
Then exactly one of either $B^{\pm}(x,r)$ is contained in
$\om_{\rm ext}$ and the other one is contained in $\Omega$.
\label{l:mainlemma}
\end{lemma}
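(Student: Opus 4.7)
The plan is to follow a three-part outline. First, check that each $B^{\pm}(x,r)$ is connected and disjoint from $\pom$. Second, use the interior Corkscrew condition to place at least one of them inside $\Omega$. Third, use the uniform (good-curve) structure of $\Omega$ supplied by Theorem~\ref{1-NTA-to-uniform} to rule out the possibility that both lie in $\Omega$.

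For the first two parts the argument is straightforward. Since $x\in\pom\cap B(x,r)$ we have $\dist(x,P)\leq\varepsilon r$ and $\pom\cap B(x,r)\subseteq N_{\varepsilon r}(P)$; a short computation comparing $(z-x)\cdot v_P>\varepsilon r$ to the signed distance from $x$ to $P$ shows that points of $B^{\pm}(x,r)$ lie strictly on the corresponding side of $P$ outside of the $\varepsilon r$-slab containing $\pom\cap B(x,r)$, so $B^{\pm}(x,r)\subseteq\ree\setminus\pom$. Each $B^{\pm}(x,r)$ is the intersection of a ball with a halfspace, hence connected, so being a connected subset of $\ree\setminus\pom=\Omega\cup\om_{\rm ext}$ it must sit entirely inside one of the two disjoint open sets $\Omega$ and $\om_{\rm ext}$. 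Applying the Corkscrew condition to $\Delta(x,r/2)$ produces $X_{\Delta}\in\Omega\cap B(x,r/2)$ with $\delta(X_{\Delta})\geq cr/2$, and the bilateral estimate then forces $|(X_{\Delta}-x)\cdot v_{P}|\geq cr/2-2\varepsilon r>\varepsilon r$ provided $\varepsilon<c/6$. Thus $X_{\Delta}$ lies in one of $B^{\pm}(x,r)$, say $B^{+}(x,r)$, giving $B^{+}(x,r)\subseteq\Omega$.

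The main step is to show $B^{-}(x,r)\subseteq\om_{\rm ext}$. Suppose, for contradiction, that $B^{-}(x,r)\subseteq\Omega$ too, and fix $N:=3/c$. Define deep auxiliary points
\[
X^{+}:=x+N\varepsilon r\,v_{P},\qquad X^{-}:=x-N\varepsilon r\,v_{P};
\]
if $\varepsilon$ is small enough depending on $c$ and $C$, both lie in $B^{\pm}(x,r)\cap B(x,r/2)$ and hence in $\Omega$. By Theorem~\ref{1-NTA-to-uniform}, $\Omega$ is $(c,C)$-uniform, so there is a good curve $\gamma\colon[0,L]\to\Omega$ from $X^{+}$ to $X^{-}$ with $L\leq 2CN\varepsilon r$ and $\delta(\gamma(s))\geq c\,\dist(\gamma(s),\{X^{+},X^{-}\})$. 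The continuous function $s\mapsto(\gamma(s)-x)\cdot v_{P}$ takes the values $\pm N\varepsilon r$ at the endpoints, so the intermediate value theorem gives some $s_{0}$ with $(Z_{0}-x)\cdot v_{P}=0$, where $Z_{0}:=\gamma(s_{0})$. Splitting $Z_{0}-X^{\pm}$ into components along and perpendicular to $v_{P}$ yields
\[
|Z_{0}-X^{\pm}|^{2}=(N\varepsilon r)^{2}+|Z_{0}-x|^{2},
\]
which combined with $|Z_{0}-X^{+}|\leq L\leq 2CN\varepsilon r$ gives $|Z_{0}-x|\leq N\varepsilon r\sqrt{4C^{2}-1}$. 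For $\varepsilon$ sufficiently small in terms of $c$ and $C$, this is less than $r$, so $Z_{0}\in B(x,r)$. Since $Z_{0}$ lies on the hyperplane through $x$ parallel to $P$, $\dist(Z_{0},P)=\dist(x,P)\leq\varepsilon r$; projecting $Z_{0}$ onto $P$ and applying the second half of the bilateral inequality then produces a point of $\pom$ within $2\varepsilon r$ of $Z_{0}$, so $\delta(Z_{0})\leq 2\varepsilon r$. On the other hand $|Z_{0}-X^{\pm}|\geq N\varepsilon r$, so the good-curve condition supplies $\delta(Z_{0})\geq cN\varepsilon r=3\varepsilon r$, contradicting the previous bound.

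The main obstacle is ensuring that the crossing point $Z_{0}$ on the good curve actually lands inside the ball $B(x,r)$, since only there is the bilateral approximation of $\pom$ by $P$ available. This is precisely what pins the choice of the auxiliary constant $N=3/c$ between the two competing bounds $cN\varepsilon r$ (from the good curve) and $2\varepsilon r$ (from the bilateral flatness), and ties the admissible $\varepsilon$ simultaneously to the corkscrew constant $c$ and the uniform-domain constant $C$.
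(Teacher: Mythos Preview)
Your proof is correct and follows essentially the same approach as the paper: both arguments use the good-curve property of uniform domains together with the bilateral flatness to derive a contradiction from the assumption that both half-balls lie in $\Omega$, and both use the interior Corkscrew to place one half-ball inside $\Omega$. The only differences are cosmetic: you reverse the order of the two steps, you place the auxiliary points $X^{\pm}$ at height $N\varepsilon r$ (with $N=3/c$) rather than at a fixed fraction $kr/2$ of $r$, and you locate the crossing point on the hyperplane through $x$ parallel to $P$ rather than on $P$ itself.
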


We assume Lemma  \ref{l:mainlemma} for the moment and complete the proof of Theorem \ref{t1}.
Fix $\varepsilon>0$ as in Lemma \ref{l:mainlemma} and $B=B(x,r)$ with
$x\in E$ and $r<{ \diam (\pom)}$. Our aim is to show that $B$ contains
an exterior corkscrew. Set
$(1/2)\Delta=\Delta(x,{r}/{2})$.
By Definition \ref{defi:BWGL} and Theorem \ref{theor:UR-2}, there is $C_{1}$ (depending on $\varepsilon$) so that \eqref{e:carleson} holds.
If we set
\[
\rho=\sup\big\{s<r/2:\exists\,y\in (1/2)\Delta, (y,s)\not\in \widehat{B}_{\varepsilon}\big\},
\]
then
\begin{multline*}
C_{1}\frac{r^{n}}{2^{n}}
\geq \widehat{\sigma}\big(\widehat{B}_{\varepsilon}\cap
(1/2)\Delta\times (0,r/2)\big)
\geq
\widehat{\sigma}\big((1/2)\Delta\times
(\rho,r/2)\big)
\\
=\sigma\big((1/2)\Delta
\big)\log\frac{r}{2\rho}
\geq C^{-1}\frac{r^{n}}{2^{n}}\log\frac{r}{2\rho}\,,
\end{multline*}
where $C^{-1}$ is the constant from \eqref{eq1.ADR}. Thus $\rho\approx r$ (with implicit constants depending on $\varepsilon$ and $n$), hence we may find $x_{1}\in (1/2)\Delta$ and $r\lesssim
r_{1}\leq{r}/{2}$ so that $B(x_{1},r_{1})\subseteq B(x,r)$ and
$\bbeta(x_{1},r_{1})<\varepsilon$.
Lemma \ref{l:mainlemma} and the fact that
$r_{1}\approx r$ imply $B$ has an exterior corkscrew, and this finishes the proof
of Theorem \ref{t1}.

Next we prove Lemma  \ref{l:mainlemma}.

\begin{proof}[Proof of Lemma \ref{l:mainlemma}]
Let $k=1/(2C+1)$, $X^{\pm}=x\pm kr v_{P}/2$, and
$\varepsilon<ck/4$, so that $X^{\pm}\in \ree\setminus \pom$. We will often use the
fact that $k<1$.

\noindent {\it Claim: At least one of $X^{\pm}$ is in $\Omega_{\rm ext}$.} Indeed, assume on the contrary  that
both points are contained in $\Omega$. Then there is a good curve $\gamma\subseteq\Omega$
connecting $X^{\pm}$ such that
\[
\diam (\gamma) \leq \ell(\gamma)\leq C|X^{+}-X^{-}|=Ckr.
\]
By our choice of $k$, and since $X^{+}\in\gamma$, this implies that
\[\gamma\subseteq B(X^{+},Ckr)\subseteq B(x,kr(C+1/2))\subseteq
B(x,r).\]
Furthermore, by  condition (2) in Definition \ref{unif-dom} there must exist $Z\in \gamma\cap P\cap B(x,r)$, such that
\[
\min\{ |Z-X^{+}|,\  |Z- X^-|\} \leq \frac{\delta(Z)}{c}\leq\frac{\varepsilon r}{c}.\]
Assume $|Z-X^+|\le  |Z- X^-|$ and let $x'$ denote the orthogonal projection of $x$ onto $L$. Then
 \[\frac{\varepsilon r}{c}\ge |X^+-Z|\geq |X^{+}-x'|\geq |X^{+}-x|-|x-x'|\geq
 \frac{kr}{2}-\varepsilon r>\frac{kr}{4},\]
which is a contradiction since $\varepsilon<\frac{ck}{4}$.
This proves the claim.

We have proved that either $X^+$ or $X^-$ belong to $\Omega_{\rm ext}$. Suppose for instance that $X^{-}\in \Omega_{\rm ext}$. Note that $B^{-}(x,r)\subseteq
\Omega_{\rm ext}$ since $B^{-}(x,r)$ it is connected, contains $X^{-}$, and does not intersect $\pom$
(because $\bbeta(x,r)<\varepsilon$). Similarly $B^{+}(x,r)\cap \pom=\emptyset$. To show that $B^+(x,r)$ is
contained in $\Omega$, we recall that, by the corkscrew condition, there is
$X_{\Delta}\in B(x,r)\cap\Omega$ such that $B(X_{\Delta},cr)\subseteq B(x,r)\cap \Omega$.
Since $\varepsilon<ck/4<c/4$, we know that $B(X_{\Delta},cr)$
must intersect either $B^{+}(x,r)$ or $B^{-}(x,r)$. Since it is contained in
$\Omega$, it cannot hit $B^{-}(x,r)$, thus it must intersect $B^{+}(x,r)$. Since
$B^{+}(x,r)$ is connected, $B^{+}(x,r)$ must also be contained in $\Omega$ and this completes the proof. Note that, as far as Theorem \ref{t1} is concerned, what is more relevant to us
is that $B^{-}(x,r)\subseteq \Omega_{\rm ext}$, but we have shown $B^{+}(x,r)\subseteq
\Omega$ for the sake of completeness. A similar argument appears in the proof of Theorem 1.18 \cite{Da}.
\end{proof}

\appendix

\section{Proof of Theorem \ref{1-NTA-to-uniform}}

\begin{lemma}
Let $X, X'\in \Omega$, $\rho:=\min\{\delta(X),\delta(X')\}$, and $|X-X'|\leq \Lambda \rho$. Let $B_{1},...,B_{N}$ with $N\leq C(\Lambda)$ be a Harnack chain
with $X\in B_1,\, X'\in B_N,$ $B_k\cap B_{k+1}\neq \emptyset$
and $C_0^{-1}\diam (B_k) \leq \dist (B_k,\partial\Omega)\leq C_0\diam (B_k).$ Then
 $\diam (B_{j})\approx \rho$ with constants depending only on $\Lambda$ and $C_0$.
Let $\gamma$ denote the polygonal
curve connecting $X$ to the center of $B_1$, then the centers of the $B_{j}$'s in order and then the center of $B_N$ to $X'$. Then $\ell(\gamma)\leq M|X-X'|$ where $M$ depends only on $n$, $\Lambda$ and $C_0$.
Moreover for any $Z\in \gamma$ it follows that $\delta(Z)\geq
  c\dist(Z,\{X,X'\})$ for some $c$ that only depends on $n$, $\Lambda$ and $C_0$.
  Thus $\gamma$ is a good curve for $X$ and $X'$.
\label{harnack-lemma}
\end{lemma}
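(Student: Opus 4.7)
The plan is to prove the three conclusions in turn.

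First, I will establish $\diam(B_j)\approx \rho$. The ratio hypothesis implies that for any $Y\in B_k$ one has $\delta(Y)\approx \diam(B_k)$ with constants depending only on $C_0$, via the 1-Lipschitz bound $\bigl|\delta(Y)-\dist(B_k,\partial\Omega)\bigr|\leq \diam(B_k)$ combined with $C_0^{-1}\diam(B_k)\leq \dist(B_k,\partial\Omega)\leq C_0\diam(B_k)$. Choosing $Y\in B_j\cap B_{j+1}$ then yields $\diam(B_{j+1})\approx \diam(B_j)$, and iterating this over a chain of length $N\leq C(\Lambda)$ shows that all diameters are mutually comparable with constants depending on $\Lambda$ and $C_0$. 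To compare with $\rho$, observe that $X\in B_1$ combined with the Lipschitz inequality $\delta(X)\leq \delta(c_1)+r_1\leq (C_0+1)\diam(B_1)$ (where $c_k$ is the center and $r_k=\diam(B_k)/2$) gives $\diam(B_1)\geq \rho/(C_0+1)$; assuming without loss of generality that $\rho=\delta(X)\leq \delta(X')$, one also has $\diam(B_1)\leq C_0\dist(B_1,\partial\Omega)\leq C_0\delta(X)=C_0\rho$. Mutual comparability then gives $\diam(B_j)\approx \rho$ for every $j$.

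Second, I will bound $\ell(\gamma)$. Using $|c_k-c_{k+1}|< r_k+r_{k+1}$ (since $B_k$ and $B_{k+1}$ overlap), direct summation gives
\[
\ell(\gamma)\leq r_1+\sum_{k=1}^{N-1}(r_k+r_{k+1})+r_N\leq 2\sum_{k=1}^N r_k = \sum_{k=1}^N\diam(B_k)\lesssim N\rho,
\]
with the final constant depending on $\Lambda$ and $C_0$. In the relevant regime $|X-X'|\gtrsim \rho$ this yields $\ell(\gamma)\leq M|X-X'|$ with $M=M(\Lambda,C_0)$. (If instead $|X-X'|\ll\rho$, the segment $[X,X']\subset B(X,\rho)\subset \Omega$ is already a good curve by inspection, so the Harnack chain is not required.)

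Third, I will verify the good-curve inequality. Any segment $[c_k,c_{k+1}]$ lies in $B_k\cup B_{k+1}\subset\Omega$: for a point $W$ on this segment, either $|W-c_k|\leq r_k$ or $|W-c_{k+1}|\leq r_{k+1}$, since otherwise $|c_k-c_{k+1}|=|W-c_k|+|W-c_{k+1}|>r_k+r_{k+1}$. The end segments $[X,c_1]$ and $[c_N,X']$ lie in $B_1$ and $B_N$ by convexity. Hence each $Z\in\gamma$ belongs to some $B_k$, giving $\delta(Z)\geq \dist(B_k,\partial\Omega)\geq C_0^{-1}\diam(B_k)$. Finally, the distance from $Z$ to the nearer of $\{X,X'\}$ is bounded by the length of the shorter sub-arc of $\gamma$ through $Z$, which, after splitting at $Z$ and summing the triangle estimates as above, is at most $\min(k+1,N-k+1)\max_j\diam(B_j)\lesssim \diam(B_k)$ by the comparability from Step 1. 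Combining, $\delta(Z)\gtrsim \dist(Z,\{X,X'\})$ with constants depending on $\Lambda$ and $C_0$.

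The main obstacle is Step 1: propagating $\diam(B_j)\approx \rho$ uniformly along the chain. The upper bound in particular requires using both the geometric fact $\dist(B_1,\partial\Omega)\leq \delta(X)$ and the iteration of the local ratio estimate, controlled by $N\leq C(\Lambda)$; without this two-sided comparability, the good-curve inequality in Step 3 cannot be closed.
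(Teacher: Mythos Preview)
Your proof is correct and follows essentially the same route as the paper: propagate the ratio estimate $\diam(B_j)\approx\diam(B_{j+1})$ along the chain (using that consecutive balls overlap and $N\leq C(\Lambda)$) to get $\diam(B_j)\approx\rho$, sum the diameters for the length bound, and for the good-curve inequality use that any $Z\in\gamma$ lies in some $B_k$ so $\delta(Z)\gtrsim\diam(B_k)\approx\rho$, while the sub-arc from $Z$ to the nearer endpoint has length $\lesssim N\rho\lesssim\rho$. The paper likewise disposes of the regime $|X-X'|\leq\rho/2$ by noting the segment $[X,X']$ already works. If anything, you are slightly more careful than the paper in explicitly verifying that each segment $[c_k,c_{k+1}]$ is contained in $B_k\cup B_{k+1}$ (the paper asserts $Z\in B_i$ for some $i$ without justification).
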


\begin{proof}
Note that if $|X-X'|\le \rho/2$ then the Harnack chain above can be taken to only have one ball and the segment joining $X$ to $X'$ is a good curve.
Thus we assume $|X-X'|\ge \rho/2$.
Since $B_j\cap B_{j+1}\neq \emptyset$ for $j<N$ then
\begin{multline*}
\diam (B_{j})\leq  C_0\dist(B_{j},\partial\Omega)
\leq C_0(\dist(B_{j+1},\partial\Omega)+\diam (B_{j+1}))
\\
\leq C_0(C_0+1)\diam (B_{j+1}),
\end{multline*}
and switching the roles $\diam (B_{j+1})\le C_0(C_0+1)\diam (B_j)$.
Thus for $j=1, \dots, N$,
$\diam (B_{j})\approx \min\{\diam (B_{1}),\diam (B_N)\}\approx \rho$ with comparability constants depending only on $n$, $\Lambda$ and $C_0$.
Note that if $|X-X'|\ge  \rho/2$ then
\[\ell(\gamma)\leq \sum_{i=1}^{N}\diam (B_{i}) + \diam (B_1) +\diam (B_N)\lesssim N
\rho\le C(\Lambda)\,\rho\lesssim |X-X'|
.\]
If $Z\in\gamma$ then there is $i=1,\dots, N$ such that $Z\in B_i$. Assume for instance that $\dist(Z,\{X,X'\})=|Z-X|$, then
\begin{equation}\label{distance}
\dist(Z,\{X,X'\})\leq \diam (B_1)+\sum_{j=1}^i\diam (B_j)\lesssim \rho\lesssim \dist (B_i,\partial\Omega)
\lesssim \delta(Z).
\end{equation}
This eventually shows that $\gamma$ is good curve for $X$ and $X'$  completing the proof.
\end{proof}

\begin{proof}[Proof of Theorem \ref{1-NTA-to-uniform}]

For $X,Y\in\Omega$ we need to find a good curve $\gamma$ connecting $X$ and $Y$.
Let $k\in\Z$ satisfy $2^{k} \leq |X-Y|<2^{k+1}$. Let $j_X, j_Y\in \Z$ be such that
$2^{j_X}\le \delta(X)<2^{j_X+1}$ and $2^{j_Y}\le \delta(Y)<2^{j_Y+1}$. Let $q_X, q_Y\in \partial\Omega$ denote respectively a closest point to $X$ and $Y$ in the boundary, that is, $\delta(X)=|q_X-X|$ and $\delta(Y)=|q_Y-Y|$.

We consider several cases.

{\bf Case 1:} $|X-Y|\leq 1/2\min\{\delta(X),\delta(Y)\}$ then the segment joining $X$ to $Y$ is a good curve.

{\bf Case 2:} $|X-Y|\geq 1/2\min\{\delta(X),\delta(Y)\}$ and $k\leq\min\{j_X+2, j_Y+2\}$ then $|X-Y|<2^{k+1}\le 8\min\{\delta(X),\delta(Y)\}$. By Lemma \ref{harnack-lemma}
there is a good curve $\gamma$ joining $X$ to $Y$.

{\bf Case 3: } $|X-Y|\geq 1/2\min\{\delta(X),\delta(Y)\}$ and $k\geq\min\{j_X+2, j_Y+2\}$. Switching $X$ and $Y$ we may assume that $j_Y+2\geq j_X+2$ and therefore $k\geq j_X+2$. For all $i\in\Z$ with $j_X+2\leq i\leq k$, let $X_i$ be a corkscrew point relative to $B(q_X,2^{i} )\cap\pom$ so that $B(X_{i},c2^{i} )\subseteq B(q_X,2^{i} )$. Provided $c\leq 1/4$, $X_{j_X+2}$ can be chosen to be $X$.

Note that
$|X_{i}-X_{i+1}|\leq 2^{i+2}$
while
$\min\{\delta(X_{i}),\delta(X_{i+1})\}\geq c2^{i} $,
thus there is a Harnack chain $B^i_{1},...,B^i_{N_i}$ with $N_i\leq C(4/c)$ which joins $X_{i}$ and $X_{i+1}$, and so that $X_i\in B^i_1$ and $X_{i+1}\in B^i_{N_i}$. Let $\gamma_{i}$ be the polygonal curve connecting
$X_i$ to $X_{i+1}$ as constructed in Lemma \ref{harnack-lemma}. Note that $\ell(\gamma_i)\lesssim |X_i-X_{i+1}|\lesssim 2^i$. Let $\gamma_X=\cup_{i=j_X+2}^{k-1}\gamma_i$.  Note that $\gamma_X$ joins $X$ to $X_k$ and
\[
\ell(\gamma_X)\leq\sum_{i=j_X+2}^{k-1}\ell(\gamma_i)\lesssim \sum_{i=j_X+2}^{k-1} |X_{i}-X_{i+1}|\lesssim \sum_{i=j_X+2}^{k-1} 2^{i}\lesssim 2^k\leq |X-Y|.
\]
Moreover, note that if $Z\in\gamma_X$, then $Z\in\gamma_i$ for some $i=j_X+2,\dots, k-1$. By Lemma \ref{harnack-lemma} and the construction of $\gamma_i$ we have that $\delta(Z)\geq  c\dist(Z,\{X_i, X_{i+1}\})$.
We may assume $\dist(Z,\{X_i, X_{i+1}\})=|Z-X_i|$ (the other case is treated similarly), then as in \eqref{distance} (here $\rho\approx2^i$) we have that $2^i\lesssim\delta(Z)$, which yields
\begin{multline}\label{dist2}
|Z-X|
\le
|Z-X_i|+|X_i-q_X|+|q_X-X|
\le \frac{1}{c} \delta(Z) + 2^i + \delta(X)
\\ 
\lesssim
\delta(Z)+
2^{j_X +1}
\lesssim \delta(Z).
\end{multline}

To proceed we next observe that
$$
2^{j_Y}
\le
\delta(Y)
\le
|Y-X|+\delta(X)
\le
2^{k+1}+2^{j_X+1}
\le
2^{k+1}+2^{k-1},
$$
which implies that $j_Y\le k+1$. Assume first that $j_Y+2\le k$ (the case $k+1\le j_Y+2\le k+3$ is considered  below). Repeating the previous argument we can find a curve $\gamma_Y$ joining $Y$ and $Y_k$, where $Y_k$ is a corkscrew point relative to $B(q_Y, 2^k)\cap \Omega$ so that $B(Y_{k},c2^{k} )\subseteq B(q_Y,2^{k} )$. This construction gives much as before $\ell(\gamma_Y)\lesssim |X-Y|$ and $|Z-Y|\lesssim\delta(Z)$ for every $Z\in\gamma_Y$.

We also observe that
\begin{multline*}
|X_k-Y_k|
\leq |X_k-q_X| +|q_X-X|+|X-Y|+|Y-q_Y|+|q_Y-Y_k|\\
\leq  2^k + 2^{j_X+1} + 2^{k+1} + 2^{j_Y+1} +2^k\leq 2^{k+3}\le 8|X-Y|
\end{multline*}
and
\[
\min\{\delta(X_k),  \delta(Y_k)\}\geq c 2^k.
\]
Applying Lemma \ref{harnack-lemma} there exists a good curve $\gamma_k$ joining $X_k$ and $Y_k$. Let $\gamma=\gamma_X\cup\gamma_k\cup\gamma_Y$, and note that $\gamma$ joins $X$ and $Y$. Moreover,
since $2^k\le |X-Y|<2^{k+1}$ then
\[
\ell(\gamma)\le \ell(\gamma_X)+ \ell(\gamma_k)+\ell(\gamma_Y)\lesssim |X-Y| + |X_k-Y_k|\lesssim |X-Y|.
\]
For $Z\in \gamma_X\cup \gamma_Y$, \eqref{dist2} and its corresponding version for $Y$ show that $\delta(Z)\ge c\dist(Z,\{X,Y\})$. If $Z\in\gamma_k$, assume for instance that $\dist(Z,\{X_k,Y_k\})=|X_k-Z|$, using the same argument as in \eqref{distance} we have
$2^k\lesssim \delta(Z)$ and hence
\begin{align*}
|Z-X|
&\le |Z-X_k|+ |X_k-q_X|+|q_X-X|
\le
\frac{1}{c} \delta(Z) + 2^k+ 2^{j_X+1}
\lesssim
\delta(Z).
\end{align*}
This proves that $\gamma$ is a good curve for $X$ and $Y$ and completes the case  $j_Y+2\le k$.

Let us finally consider the case $k+1\le j_Y+2\le k+3$. Note that in this situation we clearly have
\begin{align*}
|X_k-Y|
&\leq
|X_k-q_X| +|q_X-X|+|X-Y|
\leq
2^k + 2^{j_X+1} + 2^{k+1} \leq 2^{k+2}\le 4|X-Y|
\end{align*}
and
\[
\min\{\delta(X_k),  \delta(Y)\}\geq c 2^k.
\]
Applying Lemma \ref{harnack-lemma} there exists a good curve $\gamma_k$ joining $X_k$ and $Y$. Let $\gamma=\gamma_X\cup\gamma_k$, and note that $\gamma$ joins $X$ and $Y$. Moreover,
since $2^k\le |X-Y|<2^{k+1}$ we have that
\[
\ell(\gamma)\le \ell(\gamma_X)+ \ell(\gamma_k)\lesssim |X-Y| + |X_k-Y|\lesssim |X-Y|.
\]
For $Z\in \gamma_X$, \eqref{dist2} yields $\delta(Z)\ge c\dist(Z,\{X,Y\})$. If $Z\in\gamma_k$
and $\dist(Z,\{X_k,Y\})=|Z-Y|$ we obtain that $c|Z-Y|\leq\delta(Z)$ from the construction of $\gamma_k$. On the other hand, if
$\dist(Z,\{X_k,Y\})=|X_k-Z|$, using the same argument as in \eqref{distance} we have
$2^k\lesssim \delta(Z)$ and hence
\begin{align*}
|Z-X|
&\le |Z-X_k|+ |X_k-q_X|+|q_X-X|
\le
\frac{1}{c} \delta(Z) + 2^k+ 2^{j_X+1}
\lesssim
\delta(Z).
\end{align*}
This proves that $\gamma$ is a good curve for $X$ and $Y$ and concludes the proof of Theorem \ref{1-NTA-to-uniform}. \end{proof}

\end{document}